\newtheorem{theo}{Theorem}
\newtheorem{lem}[theo]{Lemma}
\newtheorem{remarque}[theo]{Remark}
\newcommand{\dpart}[3][]{\frac{\partial^{#1} #2}{\partial #3^{#1}}}
\begin{document}

\journalname{Computational Mechanics}
\titlerunning{Plate eigenmodes with Vekua approximations and the Method of Particular Solutions}
\title{Low-complexity  computation of plate eigenmodes with Vekua approximations and the Method of Particular Solutions}

\author{Gilles Chardon \and Laurent Daudet}

\institute{G. Chardon \at 
Institut Langevin - UPMC Univ. Paris 06\\
1 rue Jussieu\\
F-75005 Paris France\\
\email{gilles.chardon@m4x.org}\\
\emph{Present address:}
 Acoustics Research Institute - Austrian Academy of Sciences,\\
Wohllebengasse 12-14\\
A-1040 Wien Austria\\
Tel. +43 1 51581-2521
\and
L. Daudet \at
Institut Langevin\\
Paris Diderot University and Institut Universitaire de France\\
1 rue Jussieu\\
F-75005 Paris France}

\date{Received: date / Accepted: date}
\maketitle

\begin{abstract}

This paper extends the Method of Particular Solutions (MPS) to the computation of eigenfrequencies and eigenmodes 
of {thin plates, in the framework of the Kirchhoff-Love plate theory}. Specific approximation schemes are developed, 
 with plane waves (MPS-PW) or Fourier-Bessel functions (MPS-FB).  
This framework also requires a suitable formulation of the boundary conditions. 
Numerical tests, on two plates with various boundary conditions, demonstrate that the proposed approach provides competitive results with standard numerical schemes such as the Finite Element Method, at reduced complexity, and with large flexibility in the implementation choices.  
\keywords{{Kirchhoff plate theory} \and biharmonic equation \and numerical methods \and
algorithms \and eigenvalues}
\end{abstract}

\section{Introduction}

Numerical computation of eigenfrequencies and eigenmodes of plates
is an important problem in mechanics. {We focus here on thin plates,
modeled by the Kirchhoff-Love plate theory. In this framework}, an eigenmode is a non-zero solution to
\begin{equation}
D \Delta^2 u +T \Delta w -  \rho h \omega^2 u = 0
\label{bilap}
\end{equation}
with boundary conditions, where $D$ is the rigidity of the plate,
$\rho$ the specific mass of its material, $h$ its thickness, and
$T$ the normal tension applied at its edges, assumed to be uniform.
The eigenfrequencies  are the frequencies $\omega$ such that a non-zero solution
exists.
Apart from particular cases where these 
quantities can be analytically computed (e.g., circular plates with
simple boundary conditions), they must be obtained by numerical methods.

The finite element method (FEM), which uses piecewise
polynomials to approximate the solutions, can be used to compute
these eigenmodes and eigenfrequencies. However, it can be computationally intensive
at high frequencies,
as the size of the numerical problem scales as 
the square of the spatial frequency{, or in cases where singularities
appear (e.g. polygonal plates). These singularities can be treated
by adaptive methods, that require more complex theory and
implementation \cite{Babuska}. 
Because finite element methods look for solutions in the entire space (e.g. $H^1$ for the case of eigenmodes of the Laplace operator),
the number of elements needed can be quite large, particularly
at high frequencies.}

{To deal with this problem, alternative methods to the FEM have
been developed, where only the solutions to the Helmholtz
equation with a particular wavenumber are approximated,
accelerating the convergence.
Within a family (different for each wavenumber) of such solutions, eigenmodes are the
linear  combinations that also satisfy the boundary conditions.
In the case of thin plates, one can cite} the boundary element
method (BEM) \cite{chen}, the method of fundamental solution (MFS) \cite{alves},
or its variant proposed by Kang \textit{et al.} \cite{KangLee}, the Non Dimensional
Influence Function (NDIF). {Eigenmodes are identified 
by computing the determinant of the operator mapping the 
vectors of size $n$ containing the coefficients of the expansion to the value
of the function at $n$ points
on the border. 
A determinant equal  (or near) to zero indicates a solution to Eqn.~(\ref{bilap}) that approximately satisfies the boundary
conditions, with non-zero coefficients.
In practice, local minima
of the function mapping the wavenumber to the determinant are assumed
to be eigenfrequencies.
 Implementation of these
methods can however be delicate, 
because of the appearance of
spurious modes, the difficult choice of the charge points
for the MFS and NDIF, or severe ill-conditioning of the numerical
problems at high frequencies or with large approximation orders.}

 Here, we investigate a new computational method derived from 
the Method of Particular Solutions, proposed by Fox, Henrici and Moller (FHM) \cite{FHM}, {analyzed by Eisenstat \cite{eisenstat}},
and further improved by Betcke and Trefethen \cite{Betcke}, for the computation of eigenmodes of the
Laplace operator. While the fundamental ideas used in this method are similar to
the previously cited methods, it has several advantages:
\begin{itemize}
\item the stability of the numerical problems is improved,
\item multiple eigenvalues are
easier to determine,
\item and basis of the associated eigenspaces are readily available.
\end{itemize}
{These improvements  are obtained by 
a change of criteria for an eigenmode being
non-zero : we here look at 
its $L_2$-norm, instead of its expansion coefficients. This means, implementation-wise, sampling not only the border of the domain, but also its interior,
and replacing the computation of a determinant by the search of the
largest eigenvalue of a generalized eigenvalue problem.}

\medskip
{The rest of this paper is constructed as follows. Section 
\ref{sec:mps} presents the MPS framework as introduced by 
Fox, Henrici and Moller, and some of its more recent developments.  We then extend this theory to the computation
of eigenmodes and eigenvalues of plates, assumed to be  star-shaped plates with smooth
boundaries - due to  fundamental limitations of the
Vekua theory.}
Our first contribution is the analysis of an
approximation scheme based on the Vekua theory, given in Section \ref{sec:app}. 
This provides some bounds on the approximation error
of a solution of Eqn. (\ref{bilap}) by sums of Fourier-Bessel functions
in Sobolev norms, based on similar results for the simpler case of the Helmholtz equation. Our second contribution,  described in Section \ref{sec:form},  is the formulation of the
problem in a way compatible with the MPS, and its numerical evaluation presented in Section \ref{sec:num}.
We discuss the extension to more general shapes, and various
implementation matters, in Section \ref{sec:discus}.

\section{The method of particular solutions}
\label{sec:mps}

The method of particular solutions was introduced by 
Fox, Henrici and Moller (FHM) \cite{FHM} for
the case of eigenmodes of the Laplace operator in a L-shaped domain with
a singular corner.

The basic idea of this method is, instead
of considering the entire space in which the eigenmodes are searched 
(e.g. $H^1$ for the case of the Laplace operator, approximated
by finite element spaces), to consider separately the spaces
of solutions of the Helmholtz equation for different wave numbers.
Then, in each of these spaces, we can look for a nonzero function
which also satisfies the boundary conditions, \textit{i.e.}, an eigenmode.
While building approximations for a lot of different spaces
seem to be counter-productive compared to the unique approximation
needed for a Galerkin approximation, this alternative scheme is 
interesting as efficient approximations can be obtained for these spaces
of solutions to the Helmholtz equation.

The method developed by FHM, for the computation
of eigenmodes of the Laplace operator with Dirichlet boundary conditions,
is as follows. For each wavenumber $k$, we consider
$N$ points $x_j$ on the border of the domain, and 
a family of $N$ functions $\phi_i$ spanning a subspace which approximates
the set of solutions
to the Helmholtz equation.
The considered family, Fourier-Bessel functions of fractional orders,
was specifically constructed to take into account the singularity arising
in the reentrant corner of the L-shaped domain. In order to find
the eigenfrequencies, one has to construct a square matrix $M(k)$ that contains
the values of the functions $\phi_j$ at the $N$ points of the border,
and to compute its determinant $d(k)$:
\begin{equation}
d(k) = \det M(k) = 
\left|
\begin{array}{ccc}
\phi_1(x_1) & \cdots & \phi_N(x_1) \\
\vdots & & \vdots \\
\phi_1(x_N) & \cdots & \phi_N(x_N)
\end{array}
\right|
\end{equation}
 If $k$ is an eigenfrequency, there is a non-zero 
 solution to the Helmholtz equation with values zero on the boundary,
and its approximation $\sum \alpha_i \phi_i$ is thus close to zero
at the sampling points, with nonzero coefficients $\alpha_i$.
The image of the vector of coefficients $(\alpha_i)$ by the matrix
 $M(k)$ is precisely the vector of the values of $\sum \alpha_i \phi_i$
on the points of the border. This means that the determinant of the matrix $M(k)$ is
 close to zero. Therefore, the eigenfrequencies are obtained as local minima
of $d(k)$.

Numerous variants of this method, using different approximation schemes, have
been developed since the original article. 
They mostly differ by the functions
used to approximate the solutions: the MFS uses fundamental solutions, the NDIF
\cite{KangLee}
uses Bessel functions of the first kind of order 0, etc.

As pointed out in \cite{Betcke}, this simple method has known limitations. 
The discretization of the space of solutions and the sampling of the boundaries
must be of the same size, and more importantly, the matrix
$M(k)$ gets ill-conditioned as the number $N$ of functions  grows.
 Indeed, using a larger family of functions $\phi_i$, in order  to have better approximations of
the modes, makes the problem numerically unstable. An interpretation of this fact
is that the algorithm does not search for a non-zero function inside
the domain with value zero on its boundary, but actually for a function with non-zero
coefficients of its expansion, with zero value on the boundary. The properties
of the approximating families are such that having non-zero expansion
coefficients does not ensure significant values of the function inside
the domain. 

To avoid these problems, Betcke and Trefethen \cite{Betcke} suggest to solve,
for each $k$, the following optimization problem :
\begin{equation}
\label{formBT}
\begin{split}
\tau(k) = \min_{u} \|Tu\|^2_{L_2(\partial \Omega)} \\
\mbox{ such that } \|u\|^2_{L_2(\Omega)} = 1 \mbox{ and } \Delta u + k^2u = 0
\end{split}
\end{equation}
where $T$ is the trace operator on the boundary $\partial \Omega$.
Then, $k$ is an eigenfrequency if and only if $\tau(k)$, called the tension,
is zero.

This problem can be discretized as follows. A family of functions
$(\phi_i)$ is chosen for the approximation of the solutions
of the Helmholtz equation. For a function with expansion coefficients $\mathbf u = (u_1,\ldots, u_{N})$,
$$\|Tu\|^2_{L_2(\partial \Omega)} =  \mathbf u^\star \mathbf F \mathbf u$$
$$\|u\|^2_{L_2(\Omega)} =\mathbf u^\star \mathbf G \mathbf u$$
where the coefficients of the matrices $\mathbf F$ and $\mathbf G$ are
\begin{equation}
F_{ij} = \left< T\phi_i, T\phi_j \right>_{L_2(\partial \Omega)},\ \ \ \ 
G_{ij} = \left< \phi_i, \phi_j \right>_{L_2(\Omega)}
\end{equation}
These scalar products can be estimated by sampling the domain and its border,
and using a Monte-Carlo approximation of the integrals.
The optimization problem \eqref{formBT} can be replaced by the
generalized eigenvalue problem
\begin{equation}
\label{formBTdiscr}
\lambda\mathbf F\mathbf u = \mathbf G\mathbf u
\end{equation}
for which the largest eigenvalue is the inverse of $\tau(k)$. Note that here, the size of the matrices is the size of the
approximating family, and does not depend on the number of samples used in the 
domain and on its boundary.

The eigenfrequencies are found as the local minima of $\tau(k)$.
This method offers additional advantages. The coefficients
of the expansion of the eigenmodes are readily available
as the first eigenvector of Eqn. \eqref{formBTdiscr}, and
$n$-multiple eigenfrequencies are characterized by the fact that
the $n$ first eigenvalues of Eqn. \eqref{formBTdiscr} exhibit a local minimum.
Here, a basis of the eigenspace is obtained using the $n$
first eigenvectors of Eqn. \eqref{formBTdiscr}. Note that this basis
has no reason to be orthogonal, as it is the vectors of the coefficients
of the expansions  of the basis functions that are orthogonal, not the functions themselves.

The same method, with a slightly different implementation, was used
by Barnett and Berry \cite{barnett} to compute high frequency modes of quantum
cavities. As they considered only domains with smooth boundaries,
plane wave families were sufficient for the application of the method.

To generalize this method to plates, two adaptations are needed:
\begin{itemize}
\item an approximation scheme for solutions of Eqn. (\ref{bilap})
has to be developed
\item the boundary conditions encountered in plate problems
have to be modeled in a way compatible with formulation Eqn. (\ref{formBT}).
\end{itemize}
These two points are the topics of the next two sections.

\section{Approximation of plate eigenmodes}
\label{sec:app}

In this section, we prove that solutions of Eqn. (\ref{bilap}), with
arbitrary boundary conditions, can be approximated by sums of 
Fourier-Bessel functions and modified Fourier-Bessel functions. We first
give a short account of the Vekua theory for the Laplace operator,
and then extend these results to the bi-Laplace operator.

The domain where the functions %to be approximated 
are defined
is assumed to be star-shaped, and to contain the ball centered on the origin
of radius $\rho h$, where $h$ is the diameter of the domain. Furthermore,
we assume that the domain satisfies the exterior cone condition with angle $\beta \pi$. This means that each point of the border is the vertex of a cone
of angle  $\beta \pi$ which does not intersect the interior of the domain.
Such a domain is pictured Figure \ref{domain}.

\begin{figure}
\center
\includegraphics[width=8cm]{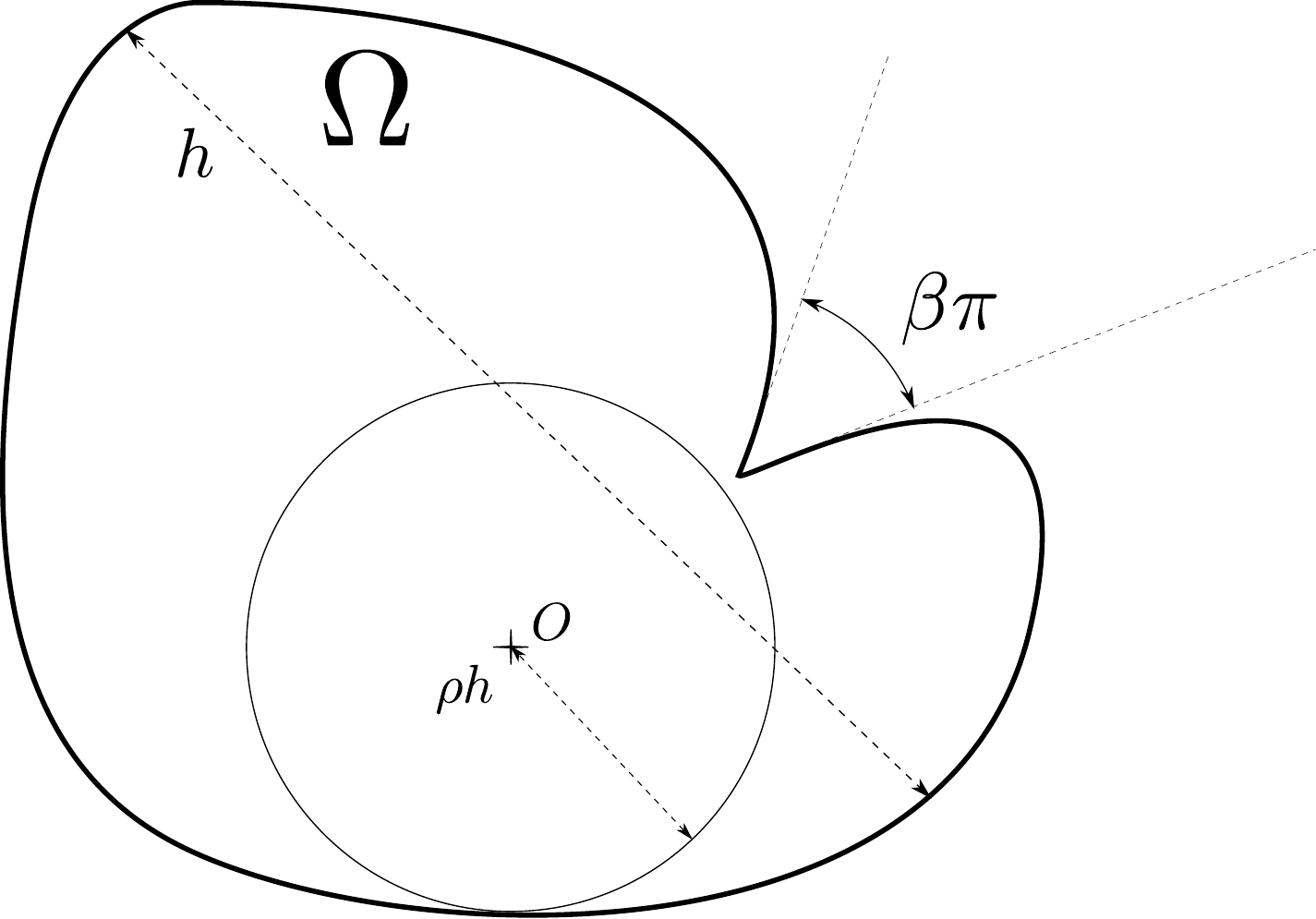}
\caption{A star-shaped domain satisfying the exterior cone condition with
angle $\beta \pi$}
\label{domain}
\end{figure}

In the following, we give the approximation bounds  in weighted Sobolev norms defined by
\begin{equation}
\label{norm}
\| u \|^2_{m,k} = 
\sum_{p=0}^m \frac{1}{k^{2p}} \sum_{p_1 + p_2 = p} \int_{\Omega} \left|\frac{\partial^p}{\partial x^{p_1}\partial y^{p_2}}
 u\right|^2 \!\!\!dxdy
\end{equation} 

\subsection{Vekua theory for the Laplace operator}

A simple example of an approximation of a solution to a differential equation
is the case of holomorphic functions. These functions, solutions
to the Cauchy-Riemann equations, can be, on a simply connected domain,
approximated by polynomials of the complex variable. By taking the real
part of a holomorphic function and of its approximations, it is shown
that we can approximate a harmonic function, solution to $\Delta u = 0$,
by harmonic polynomials in $\mathbf R^2$.

The Vekua theory, exposed in \cite{vekua} and summarized in
\cite{henricivekua}, gives similar results for solutions of elliptic
PDEs by generalizing the operation ``taking the real part'',
which allows us to map holomorphic functions to harmonic functions,
to solutions of these PDEs. Using
these operators, which are continuous and continuously invertible, approximation of holomorphic functions
by polynomials of the complex variable is translated to
approximation of solutions to the PDEs by the images of polynomials. 
Further details
on these operators in the case of the Helmholtz equation
\begin{equation}
\Delta u + \lambda u = 0
\label{helm}
\end{equation} 
and their properties in Sobolev spaces can be found in \cite{moiolavekua}.

The main result of Moiola \textit{et al.} is that the solutions of Eqn. \eqref{helm} can be approximated by 
generalized harmonic polynomials, \textit{i.e.} functions of the form
\begin{equation}
u_L = \sum_{n = -L}^{L} \alpha_n J_n(kr) e^{in\theta}
\end{equation}
where $(r,\theta)$ are the polar coordinates and $k = \sqrt{\lambda}$.

When $\lambda$ is strictly positive, Moiola \textit{et al.} \cite{moiolavekua} 
have shown that there exists a generalized
harmonic polynomial of order at most $L$ such that
\begin{equation}
\begin{split}
\| u - Q_L\|_{j,k}  \leq  C (1+ (k h)^{j+6}) e^{3(1-\rho)k h/4}\\
\left(
\frac{\log (L+2)}{L+2}\right)^{\beta(K+1-j)}
\|u\|_{K+1,k}
\end{split}
\end{equation}
In the case where $\lambda < 0$, the bound is multiplied by $e^{3k h/2}$.

When the function to be approximated is infinitely differentiable in an open domain containing 
$\Omega$,
the convergence is exponential in $L$ \cite{melenk}.

Moiola \textit{et al.} used these results to analyze the approximation of solutions of Eqn. (\ref{helm}) by sums of plane
waves \cite{moiola2}.  The orders of convergence are identical to the generalized harmonic polynomials case.

\subsection{Extension to plates eigenmodes}

Eigenmodes of a homogenous plate of rigidity $D$, specific mass  $\rho$ and
thickness $h$, with in-plane tension $T$, are solutions of
\begin{equation}
\label{eq_plaque}
D\Delta^2 w + T \Delta w - \rho h \omega^2 w = 0.
\end{equation}

In order to approximate such functions, we reduce this problem
to the approximation of two solutions of the Helmholtz equation.
Indeed, solutions of equation \eqref{eq_plaque} can be decomposed as a sum
of two solutions of equation \eqref{helm}, with parameters deduced from the
properties of the plate.

\begin{lem}
\label{dec}
Let $w$ be a solution to (\ref{eq_plaque}) in the sense of distributions.
Then $w$ can be decomposed as the sum of $w_1$ solution of $\Delta w_1 - \lambda_1 w_1 = 0$,
and $w_2$ solution of $\Delta w_2 - \lambda_2 w_2 = 0$, where $\lambda_1$ and $\lambda_2$ 
are the zeros of

\begin{equation}
\label{poly}
D\lambda^2 + T\lambda - \rho h \omega^2.
\end{equation}

Furthermore, if $w\in H^{K+2}$, then 
\begin{align}
||w_1||_{K, k_1} & \leq  \frac{2k_+^2}{\delta_\lambda}||w||_{K+2,k_1} \label{maj1}\\
||w_2||_{K, k_2} & \leq  \frac{2k_+^2}{\delta_\lambda}||w||_{K+2,k_2}. \label{maj2}
\end{align}
where $k_1 = \sqrt{|\lambda_1|}$ and $k_2 = \sqrt{|\lambda_2|}$, $k_+$ being the largest, and $\delta_\lambda = \sqrt{T^2 + 4D\rho h \omega^2}/D$ the difference between
$\lambda_1$ and $\lambda_2$.
\end{lem}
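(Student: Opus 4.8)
The plan is to exploit the algebraic factorization of the plate operator. Since $\lambda_1,\lambda_2$ are by definition the roots of the quadratic \eqref{poly}, we have $D\lambda^2 + T\lambda - \rho h\omega^2 = D(\lambda-\lambda_1)(\lambda-\lambda_2)$, and substituting the Laplacian for $\lambda$ (the two factors commute) rewrites \eqref{eq_plaque} as $D(\Delta - \lambda_1)(\Delta - \lambda_2)w = 0$. This suggests defining the two components by a partial-fraction / resolvent formula,
\[
w_1 = \frac{1}{\lambda_1 - \lambda_2}(\Delta - \lambda_2)w, \qquad w_2 = \frac{1}{\lambda_2 - \lambda_1}(\Delta - \lambda_1)w,
\]
which is meaningful in the sense of distributions whenever $w$ is a distributional solution.

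First I would check that this is the required decomposition. Summing the two expressions telescopes to $w_1 + w_2 = w$, since the $\Delta w$ terms cancel and the numerators leave $(\lambda_1-\lambda_2)w$. Then, applying $(\Delta - \lambda_1)$ to $w_1$ gives $\tfrac{1}{\lambda_1-\lambda_2}(\Delta-\lambda_1)(\Delta-\lambda_2)w$, which vanishes because it equals $\tfrac{1}{D}\,\tfrac{1}{\lambda_1-\lambda_2}$ times the left-hand side of the factored equation; hence $\Delta w_1 - \lambda_1 w_1 = 0$, and symmetrically $\Delta w_2 - \lambda_2 w_2 = 0$. This uses only the commutation of the constant-coefficient factors, so no regularity beyond the distributional equation is needed.

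It remains to establish the quantitative bounds \eqref{maj1}--\eqref{maj2}. Here $|\lambda_1 - \lambda_2| = \delta_\lambda$ by the quadratic formula applied to \eqref{poly}, so $\|w_1\|_{K,k_1} = \tfrac{1}{\delta_\lambda}\|(\Delta - \lambda_2)w\|_{K,k_1}$, and the triangle inequality in the weighted norm \eqref{norm} gives $\|(\Delta - \lambda_2)w\|_{K,k_1} \leq \|\Delta w\|_{K,k_1} + |\lambda_2|\,\|w\|_{K,k_1}$. The two terms are controlled separately. Since $\partial_x^{p_1}\partial_y^{p_2}\Delta w$ is a sum of two order-$(p+2)$ derivatives of $w$, the level-$p$ contribution to $\|\Delta w\|_{K,k_1}$ consists of order-$(p+2)$ derivatives carrying the weight $k_1^{-2p} = k_1^4\,k_1^{-2(p+2)}$; summing over $p$ and reindexing by $q = p+2$ bounds $\|\Delta w\|_{K,k_1}$ by a multiple of $k_1^2\|w\|_{K+2,k_1}$, the extra factor $k_1^2$ being exactly the rescaling built into the weighted norm. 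For the second term one uses $|\lambda_2| = k_2^2$ together with the monotonicity $\|w\|_{K,k_1} \leq \|w\|_{K+2,k_1}$. Collecting the estimates and bounding $k_1,k_2$ by $k_+$ yields a constant of the form $(\mathrm{const})\,k_+^2/\delta_\lambda$, and the bound for $w_2$ follows by exchanging the roles of $\lambda_1$ and $\lambda_2$.

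The main obstacle I anticipate is this last step: pinning down the numerical constant. Moving the second-order operator $\Delta$ across the weighted norm, while tracking both the factor coming from $\Delta w = w_{xx}+w_{yy}$ and the multiplicity with which each mixed partial of order $p+2$ is produced, is where the bookkeeping is delicate. The cleanest route is to regard $\Delta$ as a bounded map from the $\|\cdot\|_{K+2,k_1}$ norm to the $\|\cdot\|_{K,k_1}$ norm whose operator norm scales like $k_1^2$, and then to absorb the dimensionless constants, together with $|\lambda_2|\leq k_+^2$, into the stated prefactor $2k_+^2/\delta_\lambda$.
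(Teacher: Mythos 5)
Your proposal is correct and takes essentially the same route as the paper: the identical resolvent-type formulas $w_1 = \frac{1}{\lambda_1-\lambda_2}(\Delta-\lambda_2)w$, $w_2 = \frac{1}{\lambda_2-\lambda_1}(\Delta-\lambda_1)w$ (the paper arrives at them by an analysis--synthesis argument rather than by factoring the operator, a purely cosmetic difference), the same telescoping and verification via the factored equation, and the same triangle-inequality estimate using $\|\Delta w\|_{K,k_1}\lesssim k_1^2\|w\|_{K+2,k_1}$ and $|\lambda_2|=k_2^2\leq k_+^2$. The constant bookkeeping you flag as the delicate point is in fact glossed over by the paper as well (it silently uses $\|\Delta w\|_{K,k_1}\leq k_1^2\|w\|_{K+2,k_1}$, ignoring the multiplicity factor coming from $\Delta w = w_{xx}+w_{yy}$), so your treatment is, if anything, the more candid one.
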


\begin{proof}
Analysis : assuming the decomposition, we find
\begin{align}
\Delta w - \lambda_2 w & =  (\Delta w_1 - \lambda_2 w_1)
+ (\Delta w_2 - \lambda_2 w_2)\\
 & = (\lambda_1 - \lambda_2) w_1 + 0
\end{align}
A similar computation for $w_2$ gives
$$w_1 = \frac{1}{\lambda_1 - \lambda_2} (\Delta w - \lambda_2 w),
\quad
w_2 = \frac{1}{\lambda_2 - \lambda_1} (\Delta w - \lambda_1 w).
$$
Note that $\lambda_1$ and $\lambda_2$ are always distinct as $\delta_\lambda$
is always strictly positive.

Synthesis : we check that $w = w_1  + w_2$ :
\begin{align}
w_1 + w_2 & =  \frac{1}{ \lambda_1 - \lambda_2} (\Delta w - \lambda_2 w)
- (\Delta w - \lambda_1 w) \\
& = w
\end{align}
Then that $\Delta w_1 - \lambda_1 w_1 = 0$ :
\begin{align}
\Delta w_1  - \lambda_1 w_1 & =  
\frac{(\Delta^2 w - \lambda_2 \Delta w) - (\lambda_1 \Delta w - \lambda_1 \lambda_2 w)}{\lambda_1 - \lambda_2}
 \\
& =  \frac{\Delta^2 w - (\lambda_2 + \lambda_1) \Delta w + \lambda_1 \lambda_2 w}{\lambda_1 - \lambda_2}\\
& =  0
\end{align}
The last equality comes from the fact that $\lambda_1$ and $\lambda_2$ are the zeros of the polynomial
$D\lambda^2 + T \lambda -\rho h\omega^2$. We also find that $\Delta w_2 -\lambda_2 w_2 = 0$.

Finally, if $w \in H^{K+2}$, then
\begin{align}
\|w_1\|_{K,k_1} & \leq  \frac{1}{|\lambda_1 - \lambda_2|}
\left(\|\Delta w \|_{K,k_1} + \lambda_2 \| w \|_{K,k_1} \right)\\
& \leq  \frac{1}{\delta_\lambda} \left( k_1^2 \| w\|_{K+2,k_1} + k_2^2 \| w \|_{K+2,k_1}\right)\\
& \leq  \frac{2k_+^2}{\delta_\lambda} \| w\|_{K+2,k_1}
\end{align}
The result is identical for $w_2$.
\end{proof}

\begin{remarque}
\label{remrem}
In the case where no tension is applied to the plate, \textit{i.e.} $T = 0$, 
we have $k_1 = k_2 = (\rho h /D)^{1/4} \sqrt{\omega} = k$, $\delta_\lambda = 2k^2$ and
the results can be simplified as :
$$||w_1||_{K,k} \leq ||w||_{K+2,k}, \ \ \ ||w_2||_{K,k} \leq ||w||_{K+2,k}.$$
\end{remarque}

\begin{remarque}
Two orders are lost in the majorizations \eqref{maj1} and \eqref{maj2} : 
the norm of order $K$ of both components of $w$ are bounded by their norm of order $K+2$.
It is impossible, in the general case, to have a better bound. Let us consider, on
a disk sector centered at the origin, the function $f$ defined by $f = e^{i\theta/2}(J_{1/2}(r) - I_{1/2}(r))$
in polar coordinates. This function is solution of  $\Delta^2 w - w = 0$, and can be decomposed as the sum
of $f_1 = e^{i\theta/2}J_{1/2}(r)$, solution of $\Delta f_1 + f_1 = 0$, and $f_2 = - e^{i\theta/2}I_{1/2}(r)$, solution
of $\Delta f_2 - f_2 = 0$. These two functions have a radial behavior at the origin similar to
 $r^{1/2}$, and thus are not in $H^3$. Their sum however behaves like $r^{5/2}$, and is in $H^4$.
\end{remarque}

If $\lambda_1$, or $\lambda_2$, is negative,
the corresponding component of $w$ can be readily approximated
by generalized harmonic polynomials or plane waves
using the results of Moiola \textit{et al.} \cite{moiola2}.
If $\lambda_1$ or $\lambda_2$ is positive, the associated component
can be approximated in a similar way. In that case,
we use either a family of modified Fourier-Bessel functions;
where the Bessel functions $J_n$ are replaced by modified Bessel functions
$I_n$, or a family of exponential functions  $e^{\mathbf k \cdot \mathbf x}$
instead of plane waves. The bounds on the approximation error are similar.

\begin{theo}
\label{app_kl}
Let $\Omega$ be a domain satisfying the assumptions of this section, $K \geq 1$ integer,
and $w \in H^{K+2}$ verifying conditions of lemma \ref{dec}.
Then for all $L > K$,
there exist two generalized harmonic polynomials
$P_L$ and $Q_L$ with parameters $\lambda_1$ and $\lambda_2$,  
of degree at most $L$
such that for all $j \leq K$,
\begin{equation}
\begin{split}
\|w - (P_L + Q_L)\|_{j,k_+}
\leq C \frac{k_+^2}{\delta_\lambda}(1+(k_+ h)^{j+6}) e^{\frac{3}{4}(3-\rho)k_+h}\\
 \left( \frac{\ln (L+2)}{L+2}
\right)^{\beta(K-j)} (k_+h)^{K-j}\| w\|_{K+2,k_-},
\end{split}
\end{equation}
where $k_-$ is the smallest of $k_1$ and $k_2$, and $k_+$ the largest,
{and $C$ is a constant which depends only on the shape of $\Omega$, $j$ and $K$}. If $\lambda_1$ and $\lambda_2$
are both negative, the bound can be divided by $e^{3/2 k_+ h}.$
\end{theo}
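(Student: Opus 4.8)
The plan is to combine the decomposition from Lemma~\ref{dec} with the single-equation approximation bound stated for the Helmholtz operator, and then to track how the constants and the $k$-dependence propagate. First I would write $w = w_1 + w_2$ as in Lemma~\ref{dec}, so that $w_1$ solves $\Delta w_1 - \lambda_1 w_1 = 0$ and $w_2$ solves $\Delta w_2 - \lambda_2 w_2 = 0$. To each component I would apply the Moiola \textit{et al.} bound quoted above, producing generalized harmonic polynomials $P_L$ (with parameter $\lambda_1$) and $Q_L$ (with parameter $\lambda_2$) of degree at most $L$. By the triangle inequality, $\|w - (P_L + Q_L)\|_{j,k_+} \leq \|w_1 - P_L\|_{j,k_+} + \|w_2 - Q_L\|_{j,k_+}$, so everything reduces to bounding each term and summing.

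Next I would address the mismatch of norms. The Helmholtz bound for each $w_i$ is naturally expressed in the norm $\|\cdot\|_{j,k_i}$ with its own wavenumber $k_i$, whereas the target estimate is stated uniformly in $\|\cdot\|_{j,k_+}$. Because the weighted norm \eqref{norm} puts a factor $k^{-2p}$ in front of the $p$-th derivative terms, replacing $k_i$ by the larger $k_+$ only decreases each weight, so $\|\cdot\|_{j,k_+} \leq \|\cdot\|_{j,k_-}$ in the relevant direction; conversely, comparing the two one needs to pay a ratio that is a power of $k_+/k_-$, and this is exactly the source of the extra factor $(k_+h)^{K-j}$ appearing in the statement. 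I would make this conversion explicit, using that $kh = k_\cdot h$ and that $K - j \geq 0$, so that the polynomial prefactor $(1+(k_\cdot h)^{j+6})$ and the convergence factor $\big(\tfrac{\log(L+2)}{L+2}\big)^{\beta(K+1-j)}$ can both be re-expressed against $k_+$, with the worst case governing. The exponential prefactor similarly consolidates: each component contributes $e^{3(1-\rho)k_i h/4}$, and bounding $k_i \leq k_+$ together with the sign-dependent extra factor $e^{3k_+h/2}$ (present when the corresponding $\lambda_i > 0$) collapses into the single exponent $\tfrac{3}{4}(3-\rho)k_+h$ claimed, with the stated division by $e^{3k_+h/2}$ when both $\lambda_i<0$.

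Finally I would feed in the norm bounds \eqref{maj1}--\eqref{maj2} from Lemma~\ref{dec}, which control $\|w_i\|_{K,k_i}$ (or, after raising the index by one as the approximation bound requires, $\|w_i\|_{K+1,k_i}$) by $\tfrac{2k_+^2}{\delta_\lambda}\|w\|_{K+2,k_i}$. Substituting these into the two Helmholtz estimates introduces precisely the prefactor $C\,\tfrac{k_+^2}{\delta_\lambda}$ of the theorem and converts the right-hand side into the global norm $\|w\|_{K+2,k_-}$. Collecting the two contributions, absorbing all shape-, $j$- and $K$-dependent numerical factors into a single constant $C$, and retaining the dominant exponential and algebraic factors yields the stated inequality.

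The main obstacle I expect is the bookkeeping of the $k$-weighting: the individual bounds live at different wavenumbers $k_1, k_2$, and each substitution between $\|\cdot\|_{j,k_i}$, $\|\cdot\|_{j,k_+}$ and $\|\cdot\|_{K+2,k_-}$ costs a controlled power of $k_+ h$ that must be tracked carefully so as to land exactly on the factor $(k_+h)^{K-j}$ and on the exponent $\tfrac{3}{4}(3-\rho)k_+h$ rather than something slightly larger. The sign-dependent modification (replacing $J_n$ by $I_n$ and plane waves by real exponentials when $\lambda_i>0$, which merely rescales the exponential prefactor) is routine by comparison, as the excerpt already asserts the bounds are analogous; the delicate part is purely the uniformization of the two wavenumbers into a single clean estimate.
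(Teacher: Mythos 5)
Your overall route coincides with the paper's proof: decompose $w = w_1 + w_2$ by Lemma~\ref{dec}, approximate each Helmholtz component by a (possibly modified) generalized harmonic polynomial via the Moiola-type bound, combine with the triangle inequality, and feed in the estimates \eqref{maj1}--\eqref{maj2} to reach $\|w\|_{K+2,k_-}$; your treatment of the exponential prefactors (the extra $e^{3k_+h/2}$ when a $\lambda_i$ is positive, collapsing into $e^{\frac{3}{4}(3-\rho)k_+h}$, removable when both are negative) is also correct. The genuine gap is in the step you yourself flag as the delicate one: the origin of the factor $(k_+h)^{K-j}$. You claim it is the price of converting between the norms at wavenumbers $k_1$, $k_2$, $k_+$ and $k_-$. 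This cannot work, for two reasons. First, no conversion cost is ever incurred: the weighted norm \eqref{norm} is monotone decreasing in $k$, and every substitution the proof needs goes in the favorable direction --- on the left-hand side $\|w_i - P_L\|_{j,k_+} \leq \|w_i - P_L\|_{j,k_i}$ because $k_+ \geq k_i$, and on the right-hand side $\|w\|_{K+2,k_i} \leq \|w\|_{K+2,k_-}$ because $k_i \geq k_-$; the reverse comparisons, which would indeed cost powers of $k_+/k_-$, are never needed. Second, even if such a cost were paid, it would be a power of the dimensionless ratio $k_+/k_-$ and could never produce $(k_+h)^{K-j}$, which involves the diameter $h$ of the domain.

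In the paper's proof this factor is not generated by any norm manipulation: it is already present in the approximation theorem invoked for each component (Theorem 2.2.1.ii and Remark 1.2.6 of Moiola's thesis), which for a Helmholtz solution in $H^{K}$ gives a bound carrying $(k_i h)^{K-j}$ alongside $\left(\ln(L+2)/(L+2)\right)^{\beta(K-j)}$; one then simply bounds $k_i \leq k_+$ everywhere. (Note that the version quoted in Section~\ref{sec:app} of the paper omits this algebraic factor; the proof uses the complete statement from the reference, so executing your plan literally with the quoted bound would not land on the claimed inequality.) A further slip: your parenthetical option of applying the approximation bound at index $K+1$, i.e.\ against $\|w_i\|_{K+1,k_i}$, is not available. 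The components are only in $H^{K}$ --- two orders are genuinely lost in the decomposition, as the paper's remark on the disk-sector example shows --- and Lemma~\ref{dec} controls only $\|w_i\|_{K,k_i}$ by $\frac{2k_+^2}{\delta_\lambda}\|w\|_{K+2,k_i}$. Applying the bound at index $K$ is precisely what yields the exponent $\beta(K-j)$ in the statement rather than $\beta(K+1-j)$.
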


\begin{proof}
The first step of the proof is to decompose $w$ using lemma \ref{dec}.
Using Theorem 2.2.1.ii and Remark 1.2.6 from \cite{moiola}, 
we can approximate these two components by generalized harmonic polynomials,
modified if $\lambda$ is positive.

Let us assume $\lambda_1 < 0$, $\lambda_2 >0$, with $|\lambda_2| > |\lambda_1|$.
Other cases can be treated similarly.

We have
\begin{align*}
\|w - (P_L + Q_L)\|_{j,k_+} & \leq  \|w_1\|_{j,k_+} + \|w_2\|_{j,k_+} \\
& \leq  \|w_1\|_{j,k_1} + \|w_2\|_{j,k_2} \\
\begin{split}
& \leq  C (1+(k_1 h)^{j+6}) e^{\frac{3}{4}(1-\rho)k_1h} \\
& \left( \frac{\ln (L+2)}{L+2}
\right)^{\beta(K-j)} (k_1h)^{K-j}\| w_1\|_{K,k_1} \\
&  + C (1+(k_2 h)^{j+6}) e^{\frac{3}{4}(3-\rho)k_2h} \\
& \left( \frac{\ln (L+2)}{L+2}
\right)^{\beta(K-j)} (k_2h)^{K-j}\| w_1\|_{K,k_2}
\end{split}\\
\begin{split}
& \leq  C \frac{k_+^2}{\delta_\lambda}(1+(k_+ h)^{j+6}) e^{\frac{3}{4}(3-\rho)k_+h} \\
&\left( \frac{\ln (L+2)}{L+2}
\right)^{\beta(K-j)} (k_+h)^{K-j}\| w\|_{K+2,k_-}
\end{split}
\end{align*}
\end{proof}

\begin{remarque}
For plates without in-plane tension, the result is slightly simpler.
In that case, the roots of Eqn. (\ref{poly}) have same absolute values and opposite signs.
We thus have
\begin{align}
\begin{split}
\|w - (P_L + Q_L)\|_{j,k} &
\leq C (1+(k h)^{j+6}) e^{\frac{3}{4}(3-\rho)kh} \\
& \left( \frac{\ln (L+2)}{L+2}
\right)^{\beta(K-j)} (kh)^{K-j}\| w\|_{K+2,k}.
\end{split}
\end{align}
\end{remarque}
 
\begin{remarque}
We prove the approximation result for sums of Fourier-Bessel functions.
However, as Fourier-Bessel functions can be approximated by sums of plane
waves, modified Fourier-Bessel functions can be approximated by sums
of exponential function of the type $e^{\vec k \cdot \vec x}$ with constant
norm of $\vec k$. This allows an approximation of solutions of Eqn. \eqref{eq_plaque}
by sums of plane waves and exponential functions.
\end{remarque}

\section{Boundary conditions}
\label{sec:form}

Boundary conditions usually encountered in plate problems
(clamped edges, simply supported edges and free edges) are not
as such readily usable for the numerical scheme proposed here, and
thus need to be modeled differently.

For clamped edges, the displacement and its normal derivative
are zero. The tension for clamped boundary conditions is then:
$$t_c = \int_\Gamma \left| w \right|^2 + \frac{1}{k^2}\int_\Gamma \left|
\dpart{w}{n} \right|^2.$$
A simply supported edge has zero displacement and 
torsion moment $M_n$
$$t_s = \int_\Gamma \left| w \right|^2 + \frac{1}{D^2k^4}\int_\Gamma \left|M_n\right|^2.$$
Finally, free edges have zero torsion moment and Kelvin-Kirchhoff edge reaction $K_n$
$$t_l = \frac{1}{D^2k^4}\int_\Gamma \left|M_n\right|^2
+ \frac{1}{D^2k^6}\int_\Gamma \left|K_n\right|^2.$$
The torsion moment and the Kelvin-Kirchhoff edge reaction writes \cite{geradin}
$$M_n = -D \left( \dpart[2]{w}{n} + \nu \dpart[2]{w}{t} \right)$$
$$K_n = -D \left(\dpart[3]{w}{n} + (2-\nu) \frac{\partial^3 w}{\partial
n \partial t^2} + \frac{1- \nu}{R}
\left( \dpart[2]{w}{n} - \dpart[2]{w}{t} \right)\right)$$
where $\nu$ is the Poisson ratio of the material, and
$R$ the curvature radius of the boundary.

The constants $1/k^2$, $1/D^2k^4$ and $1/D^2k^6$ in front of some integrals
not only make the quantities homogeneous, but also improve the
numerical stability, as the contribution of the two integrals
are rescaled to have the same order of magnitude. For instance,
in the case of $t_c$ estimated using plane waves, the first term contains products of planes waves, while the second term contains products of derivatives of plane waves, which are the plane waves themselves multiplied by the scalar product of
the wave vector and a unit vector normal to the boundary. With no rescaling,
the second term would be more and more influent as the frequency increases.
This rescaling is similar to the weights used to define the norms
$\| \cdot \|_{m,k}$ in Eqn. \eqref{norm}.

Note that for a plate with various boundary conditions along the border,
the corresponding tension is the sum of the tension for the different
boundary conditions, integrated on their respective domain.

\section{Numerical results}
\label{sec:num}

We now compare the methods with analytical results for simple cases, and
with numerical results obtained with Cast3M  \cite{castem}, a 
widely-used FEM simulation program.
To avoid technicalities, the simulated plates are star-shaped with smooth
boundaries. The treatment of more general shapes is discussed in the next section.
Since these numerical tests are done without in-plane tension, we can use
the wavenumber $k=(\rho / D)^{1/4} \sqrt{\omega}$ to express the eigenfrequencies. In the case of in-plane tension, the wavenumbers used to generate the 
Fourier-Bessel functions (resp. plane waves) and modified Fourier-Bessel
functions (resp. exponential functions) are computed according to Lemma
\ref{dec}.

We first test the method on a circular plate of radius 1, with various boundary conditions. 
%,and compare the results to the tables given by Leissa. 
Here, we use plane
waves, as eigenmodes of circular plates are sums of a Fourier-Bessel function
and a modified Fourier-Bessel function, making the numerical problem
trivial. Table \ref{tabeg} gives the computed eigenfrequencies
and the values given in Leissa  \cite{leissa} for low-frequency modes. Figure \ref{tens1} shows the tension
as a  function of the frequency in the clamped case, and the
next  three eigenvalues of the numerical problem \eqref{formBTdiscr}. These can be 
used to identify multiple eigenvalues and to compute a basis of the
associated eigenspaces. Examples of eigenmodes are given
for the three boundary conditions in Figure \ref{em1}. In these numerical experiments,
the boundary of the disk was discretized with 2048 points, its interior
with 1024 points drawn using the uniform distribution on $\Omega$, and the
 number of plane waves (which is also the size of the numerical problems to solve) was 
 taken as $8k$, which in this case ranges from 28 to 80.

\begin{figure}
\centering
\includegraphics[width=9cm]{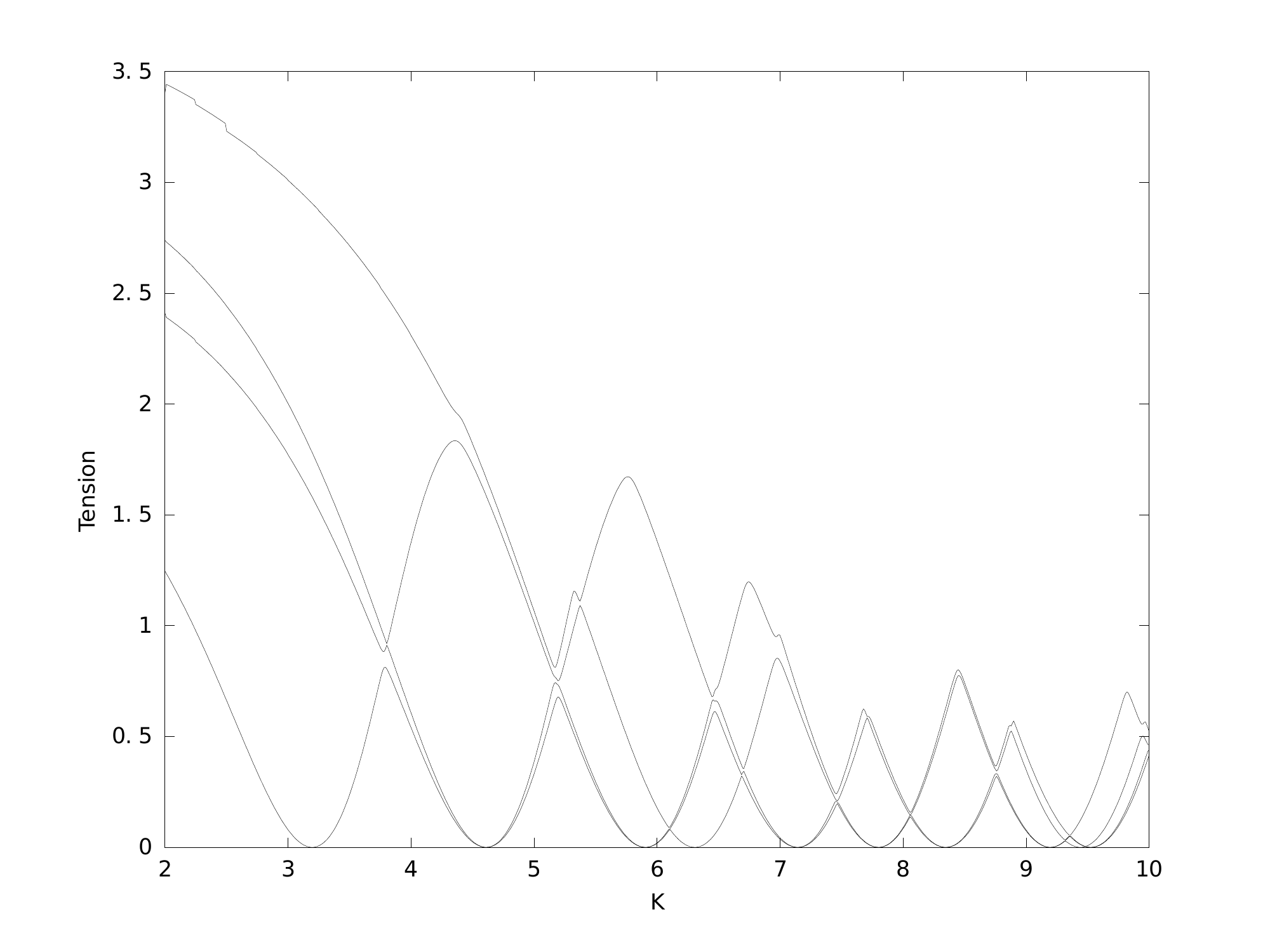}
\caption{Inverses of the four largest eigenvalues (the first one being the tension) of discrete
problem \eqref{formBTdiscr}, for the clamped circular plate}
\label{tens1}
\end{figure}

\begin{figure*}
\centering
\includegraphics[width=15cm]{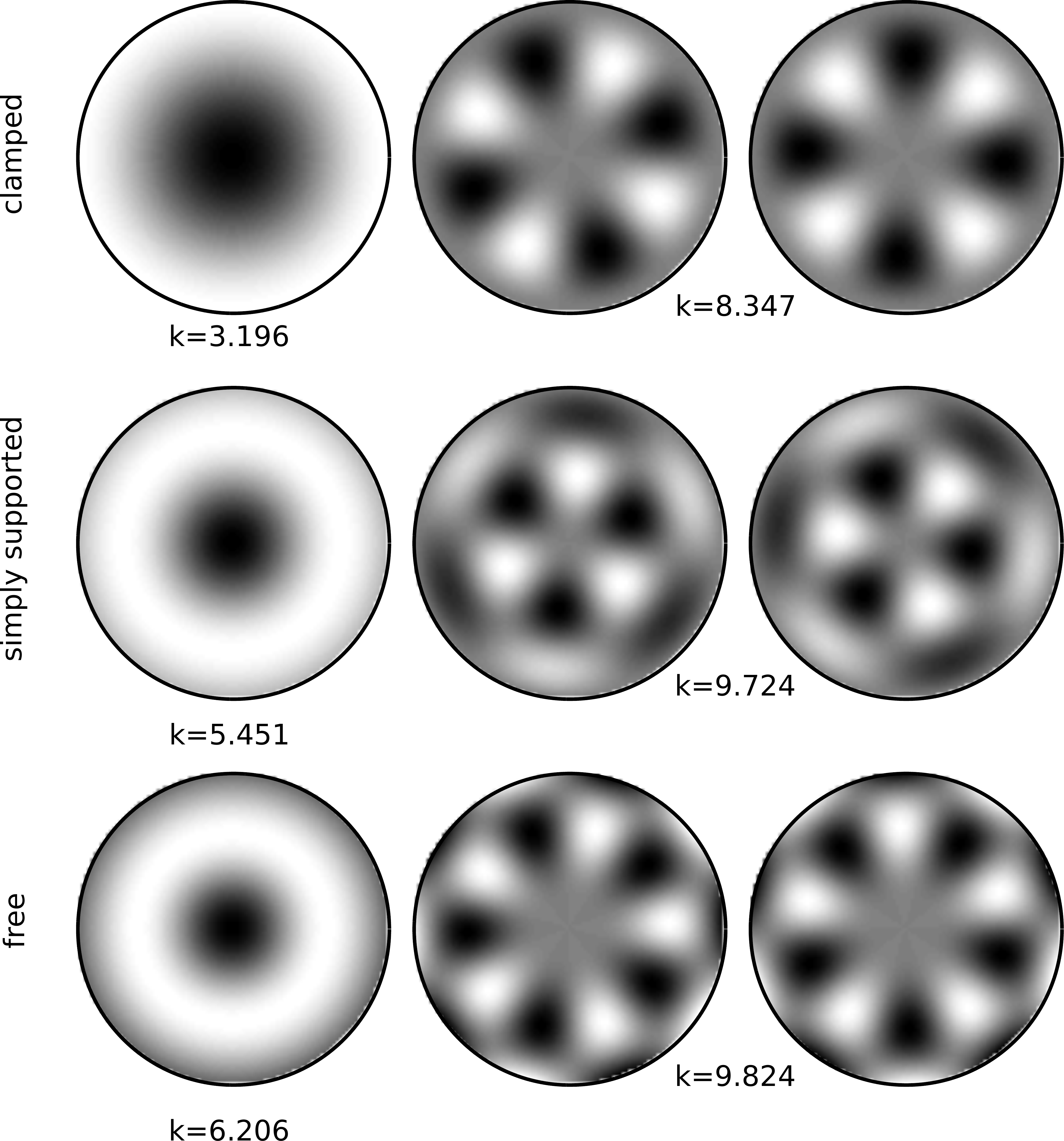}
\caption{Some examples of eigenmodes, for simple and double eigenvalues,
of the circular plate with clamped, simply supported and free boundary conditions}
\label{em1}
\end{figure*}

\begin{table}
\begin{tabular}{c|cc|cc|cc}
& \multicolumn{2}{c|}{Clamped} &
\multicolumn{2}{c|}{Simply supported} &
\multicolumn{2}{c}{Free}\\
& Leissa & MPS-PW & Leissa & MPS-PW & Leissa & MPS-PW \\
\hline
1 & 3.196 & 3.196 & 2.23 & 2.22 & 2.29 & 2.29 \\
2 & 4.611 & 4.611 & 3.73 & 3.73 & 3.01 & 3.01 \\
3 & 5.906 & 5.905 & 5.06 & 5.06 & 3.50 & 3.50 \\
4 & 6.306 & 6.306 & 5.46 & 5.45 & 4.53 & 4.53 \\
5 & 7.144 & 7.144 & n/a & 6.32 & 4.65 & 4.64 \\
\end{tabular}
\caption{Low eigenfrequencies for a circular plate with various boundary conditions, in Leissa \cite{leissa} and computed with the MPS and plane waves (MPS-PW).}
\label{tabeg}
\end{table}

\medskip

We now compute eigenfrequencies and eigenmodes of a second plate, of more complex shape, with boundaries defined by the
parametric equations
\begin{equation}
\left\{
\begin{array}{l}
x= \cos t \\
y= \sin t + \frac{1}{3}\sin 2t
\end{array}
\right.
\ \ \ \ t \in [0, 2\pi)
\end{equation}
%Here,  as 
%implemented in CASTEM. 

Table \ref{tabeg2} gives the first ten eigenfrequencies of this plate 
with the three types of boundary conditions : clamped, simply supported and free. 
The proposed method, MPS with plane waves (MPS-FB), uses 
2048 points on the border and 1024 points inside.
The number of plane waves is $10k$, which here ranges from 20 to 80.
For the clamped conditions, the results using Fourier-Bessel functions
are also given (method MPS-FB). 
We compare our results with those obtained by the FEM, as 
implemented in the Cast3M package {using a triangular mesh
and finite elements of order 3, with 6926 elements} and the 
border discretized by 180 
segments.

For two eigenmodes of the clamped plate,
we give  in table \ref{tabeg22} the estimated eigenfrequencies with varying size of the discrete problems
for FEM and the MPS-PW, and compare them to the values
obtained in \cite{alves} with the MFS (with an unspecified size).
These two modes, computed with the MPS-PW, are shown on Figure \ref{figem2}.

\begin{table*}
\centering
\begin{tabular}{c|ccc|cc|cc}
&\multicolumn{3}{c|}{Clamped} &\multicolumn{2}{c|}{Simply supp.}&\multicolumn{2}{c}{Free} \\
 & FEM & MPS-PW & MPS-FB & FEM & MPS-PW & FEM & MPS-PW \\
\hline
1 & 3.3786 & 3.3782 & 3.3562 & 2.4104 & 2.4061 & 2.1933 & 2.2000 \\
2 & 4.5937 & 4.5944 & 4.5864 & 3.6727 & 3.6733 & 2.3516 & 2.4001 \\
3 & 4.6267 & 4.9275 & 4.8815 & 4.0065 & 3.9993 & 2.7468 & 2.7481 \\
4 & 5.8359 & 5.8376 & 5.8366 & 4.9500 & 4.9505 & 3.4225 & 3.4222 \\
5 & 6.2048 & 6.2067 & 6.1987 & 5.3203 & 5.3216 & 3.7624 & 3.7623 \\
6 & 6.4551 & 6.4567 & 6.4527 & 5.5451 & 5.5456 & 4.0996 & 4.0993 \\
7 & 7.0716 & 7.0738 & 7.0738 & 6.2134 & 6.2147 & 4.2007 & 4.2004 \\
8 & 7.4914 & 7.4949 & 7.4949 & 6.6169 & 6.6188 & 4.7986 & 4.7985 \\
9 & 7.7709 & 7.7740 & 7.7590 & 6.9041 & 6.9058 & 4.9847 & 4.9845 \\
10 & 7.9676 & 7.9300 & 7.9660 & 7.0629 & 7.0308 & 5.4319 & 5.4316 
\end{tabular}
\caption{First ten eigenfrequencies, expressed in wavenumber $k$,  of
the second plate with various boundary conditions, for FEM, MPS with plane waves (MPS-PW) and
MPS with Fourier-Bessel functions (MPS-FB, clamped conditions only)}
\label{tabeg2}
\end{table*}

\begin{table*}
\centering
\begin{tabular}{ccccc|c|ccc}
\multicolumn{5}{c|}{FEM} & MFS
&  \multicolumn{3}{c}{MPS-PW}\\
\hline
\multicolumn{5}{c|}{number of elements} &
&  \multicolumn{3}{c}{number of plane waves}\\

56 & 242 & 1048 & 6926 & 28171 &  & 50 & 60 & 70\\\hline
8.7505 & 8.9527 & 9.0714& 9.1066 & 9.1111 & 9.11259
& 9.1068 & 9.1121 & 9.1126 \\
9.0159 & 9.1555 & 9.2438 & 9.2735 & 9.2777 & 9.27903
& 9.2651 & 9.2722 & 9.2787
\end{tabular}
\caption{Eigenfrequencies for two modes of the clamped second plate, with various
discretization sizes, computed by FEM (as implemented in Cast3M), {MFS (results of \cite{alves} on the same plate), and MPS-PW, with various number of plane waves.}}
\label{tabeg22}
\end{table*}

\begin{figure}
\includegraphics[width=8cm]{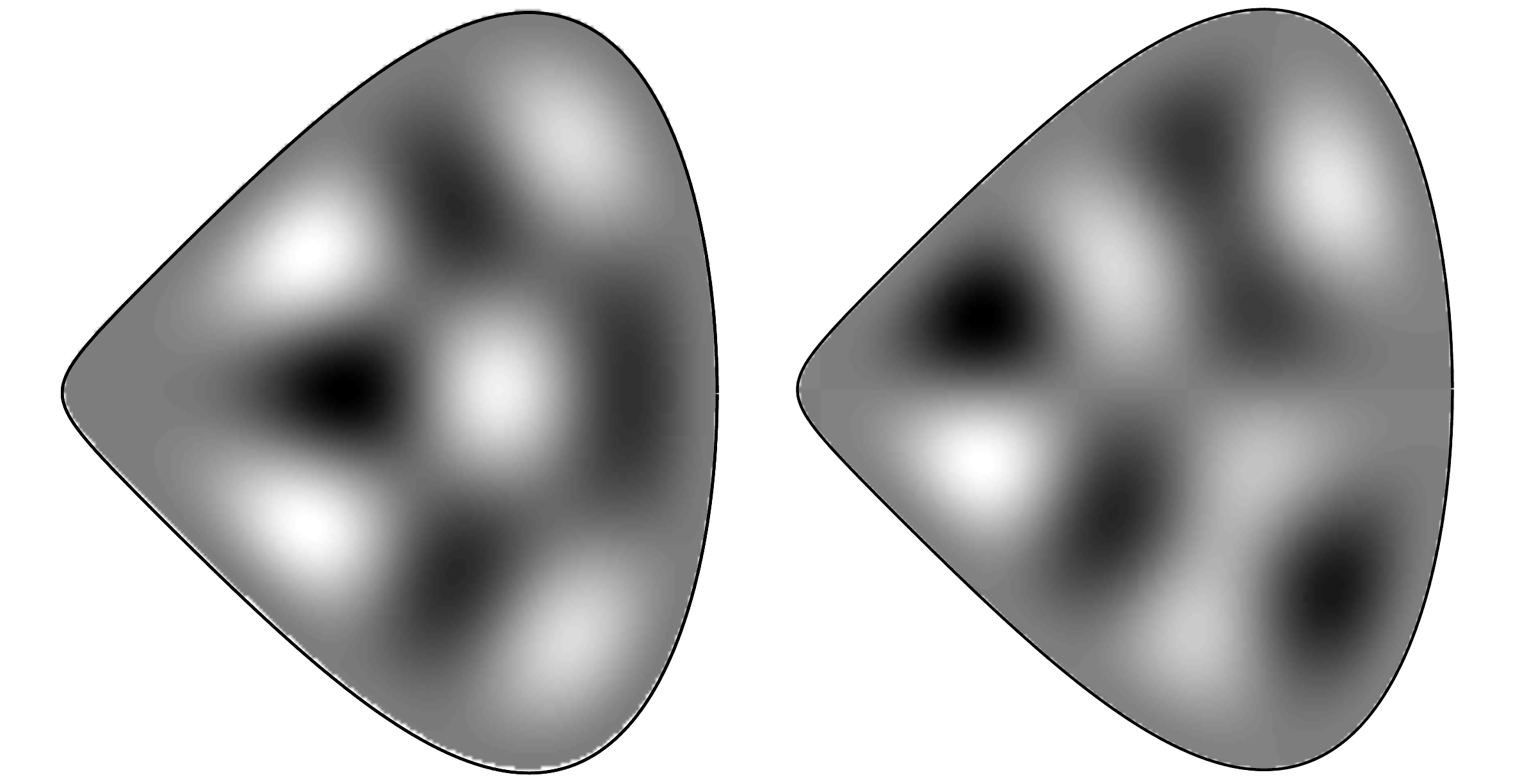}
\caption{Eigenmodes of the second plate, for  $k=9.1126$ (left) and $k=9.2787$ (right)
computed by MFS-PW, for the clamped boundary conditions}
\label{figem2}
\end{figure}

\section{Discussion}
\label{sec:discus}

We now discuss some implementation details, relative to the treatment
of more complex shapes, or the acceleration of the computations,
and the relative merits of the MPS and other methods.

\subsection{Fourier-Bessel vs. plane waves}
\label{subsec:fbpw}

As shown by the numerical experiments, both Fourier-Bessel functions
and plane waves can be used to approximate eigenmodes. They have similar
approximation properties, but differ implementation-wise.
Fourier-Bessel functions are orthogonal on a disc, ensuring better stability,
while plane waves are more and more ill-conditioned as their number
increases. However, this can be treated by pre-conditioning the
plane waves family with a discrete Fourier Transform, mapping the
plane waves to approximations of the Fourier-Bessel functions.

The main advantage of the plane waves is the straightforward computation
of their derivatives : differentiating a plane wave along a certain
direction amounts to multiplying it with the scalar product of its
wave vector with a unit vector. This makes the construction
of the matrices both easy to implement and fast.

\subsection{Shapes}

The proposed method relies on an approximation scheme for solutions
to the equation \eqref{bilap}, and therefore is limited to cases where
such approximations are available. In particular, the approximation
scheme assumes a star-shaped domain.
In the case of a simply connected, but non-star convex, domain, the 
approximation is not guaranteed to succeed. A possible way to overcome
the problem is to cut the domain into star-convex subdomains, to approximate
the solutions of \eqref{bilap} in these subdomain, and to add
terms in the tension ensuring that the displacement, normal
derivative of the displacement, bending moment and strain have
the same value at both sides of the internal boundaries. A similar
method as already been applied to the particular case of polygonal membranes
\cite{descloux}.

Other cases of non-convex domains are domains with holes, but star-convex
if the holes are filled. In that case, following Vekua, one can approximate
solutions of \eqref{bilap} by adding to the family of planes waves or Fourier-Bessel 
functions, the sets of Fourier-Bessel functions of second kind $e^{in\theta} Y_n(kr)$
and $e^{in\theta} K_n(kr)$, one for each hole, centered on a point chosen
in each of them. This type of approximation can be compared to the
approximation of holomorphic functions given by the Runge theorem.
An application to membranes can be found in \cite{betckethese}.

\subsection{Singularities}

The domains considered here have smooth boundaries. This guarantees that
the eigenmodes of the plates are smooth, and that the convergence of
the approximations is fast. However, as shown in Theorem \ref{app_kl}, singularities, which can appear at corners
of a domain with non-smooth boundaries, slow down the convergence of the approximations. The size of the discretization for such cases is then larger
than for smooth boundaries, possibly too large to guarantee the
numerical stability of the computations. To accelerate the convergence,
FHM, Eisenstat and BT use fractional Fourier-Bessel functions centered on the
singular corner of the domain (in their numerical experiments, an
L-shaped polygon).  This idea has been used for plates in a different setting
by De Smet et al. \cite{desmetsing}.

Note that, for a plate with polygonal holes, at least a corner of the
hole is singular. In that case, it is impossible to use fractional
Fourier-Bessel functions, as it is impossible to define them on a domain
containing a path around the origin. In that case, one can combine
fractional Fourier-Bessel function with the method
described in the previous section \cite{betckethese}.

\subsection{Numerical considerations}

Although the numerical stability is improved compared to the determinant
based MPS, the improved version is still prone to instabilities when
a large approximation order is used. These instabilities are further
amplified in the case of plates, where modified Fourier-Bessel functions,
or exponential functions, are included in the approximating families.
The behavior of these functions are such that they are non-negligible only
on a small region near the boundary of the domain. Using the Monte-Carlo
approximation with uniform density to estimate the coefficients of the matrices is thus unstable. Using a non-uniform density of samples, with more
samples near the border would be a way to improve the stability of the
estimations, in a way similar to what is used in \cite{acha}, where the
reconstruction of a solution of the Helmholtz equation on a disc is improved by placing a fraction of the samples on the border of the disc.

\subsection{Speeding up the eigenvalue search}

In order to locate the minima of the tension, the proposed algorithm simply 
computed it on linearly spaced
values in the interval we were interested in. However, the particular
behavior of the tension (and more generally, of the eigenvalues of
problem \eqref{formBTdiscr}) could be used to accelerate the search
of these minima. Indeed, the tension is a sequence of branches which
behave more or less as parabolas. Newton iterations, along with the
computation of the derivatives of the eigenvalues of problem \eqref{formBTdiscr}, 
could therefore be used to quickly locate the minima.

\subsection{Comparison with other methods}

{The simulations (Table \ref{tabeg22}) show that the MPS method needs a significantly lower order
of approximation
than the FEM to obtain accurate results. In particular,
the MPS using 50 Fourier-Bessel functions achieve a much better estimation
of the eigenfrequencies that the FEM with 56 elements. Indeed, while the FEM needs elements of size comparable with
the tenth of the wavelength (and thus an order scaling like the square of the wavenumber),
the MPS needs an order proportional to the wavenumber \cite{barnett}, because only
the solutions to Eqn.~(\ref{bilap}) are approximated.}

{The computation times for the FEM and the MPS to obtain Table \ref{tabeg2}
were similar. However the MPS was here implemented using Octave and Matlab,
without the improvements given in the previous subsection,
and significants gain in term of complexity can be expected.
The implementation of the MPS is also simpler,
as no meshing is required.}

{The MFS, like the MPS, needs low orders to achieve correct results.
However, to use the MFS (or the NDIF), one has to choose a set of charge points. The
accuracy of the method can vary with this choice, and while
methods have been suggested for this task \cite{alves}, an
optimal and efficient way is not yet known. Here, in the case of the 
plane wave approximation, no choice is needed. Furthermore, 
the use of Bessel functions makes the implementation delicate,
as pointed out in Subsection \ref{subsec:fbpw}.}

{Singularities can be treated  with FEM methods:
using adaptive methods, such as $hp$-FEM  \cite{Babuska}, a fast convergence in presence of singularities
can be achieved. The treatment
of singularities is however simpler using in the MPS, as the 
singular functions to be used can be determined a priori directly from
the geometry of the problem.}

{The solutions given by the MPS are smooth, as they are finite sums of
smooth functions. Galerkin methods, such as the Element Free Galerkin method, have also been developed to improve the regularity of the solutions, which is limited
in the FEM by the regularity of the elements. However, as pointed out
in the case of the FEM,
the fact that these methods approximate a functional space
not limited to the solutions to Eqn.~(\ref{bilap}) leads to a large
order of approximation to achieve good results.}

{While the MPS has significant advantages compared to other methods,
the need of a family approximating the solutions to Eqn.~(\ref{bilap}) 
restricts its use to cases where such a family is known.
Problems with plates of heterogeneous materials or
thickness can be difficult or impossible to treat using this method.}

\section{Conclusion}

This paper has described the extension of the Method of Particular Solutions for
the computation of eigenmodes of plates. This method has numerous
advantages. It can be used with any approximation scheme for the solutions
of the studied equation ; in this paper, we used Fourier-Bessel functions and plane waves,
but the method could be extended with fractional Fourier-Bessel functions in order to treat
singularities. Its formulation offers a large flexibility in the discretization 
of the domain,  and independently, in the size of the numerical problem. 
The determination of multiple eigenvalues and eigenmodes
is also straightforward.  Finally, the so-called tension, that has to 
be minimized to find the eigenfrequencies, has a specific 
shape that  can be used to speed up the search.
Future improvements include sampling schemes
yielding better stability of the numerical problems, and {more efficient computational methods}
for high frequency eigenmodes.

\section*{Reproducible research}

The Matlab/Octave code used to compute eigenfrequencies and eigenmodes of Figures \ref{em1} and \ref{figem2}
is available online \cite{code}.

\section*{Acknowledgement}
The authors
acknowledge partial support from Agence Nationale de la Recherche (ANR), project
ECHANGE (ANR-08-EMER-006), project LABEX WIFI (ANR-10-IDEX-0001-02 PSL*),
and Austrian Science Fund (FWF) START-project FLAME (Y 551-N1).

\bibliographystyle{ieeetr}
\bibliography{modal}

\end{document}